\documentclass[12pt]{article}
\usepackage{amsmath, amssymb, amsthm}
\usepackage{mathtools}
\usepackage{mathrsfs}

\numberwithin{equation}{section}

\newcommand{\1}{\mathbf{1}}
\newcommand{\Ibar}{\mathbf{1}} 

\DeclareMathOperator{\diam}{diam}

\newtheorem{theorem}{Theorem}[section]
\newtheorem{lemma}[theorem]{Lemma}
\newtheorem{proposition}[theorem]{Proposition}
\newtheorem{definition}[theorem]{Definition}
\newtheorem{corollary}[theorem]{Corollary}
\theoremstyle{remark}
\newtheorem{remark}[theorem]{Remark}

\title{Random Fixed Point Theorems for Relaxed Asymptotic Contractions 
in Random Normed Modules}

\author{Jie Shi \\
Department of Mathematics and Statistics, Hubei Engineering University \\
272 Traffic Avenue, Xiaogan, 432000, Hubei Province, P.R. China}
\date{\today}

\begin{document}

\maketitle

\begin{abstract}
We introduce the notion of a random relaxed asymptotic contraction in the setting of random normed modules.
The contraction condition employs two quasi-metrics that are built directly from the random operator: a lower quasi-metric which adaptively switches between a four-point minimum and the ordinary one-step distance, and an upper quasi-metric which takes the maximum of four fundamental distances.
The bounds are allowed to depend on the iteration index and are required to converge locally uniformly almost surely to a Boyd--Wong function.
Using the fibre decomposition method based on \(\sigma\)-stability and the local property, we show that any such mapping defined on an essentially bounded, \(\sigma\)-stable and \(L^0\)-closed set admits a unique random fixed point, and all iterates converge in the \((\epsilon,\lambda)\)-topology.
Our result strictly generalizes the random analogue of Kirk's asymptotic contraction theorem and unifies several deterministic and random fixed point theorems under a single flexible framework.
\end{abstract}
\medskip
\noindent\textbf{Keywords:} Random fixed point; relaxed asymptotic contraction; lower and upper quasi-metrics; Boyd--Wong function; random normed module.
\medskip

\section{Introduction}

The Banach contraction principle \cite{Banach1922} is a cornerstone of metric fixed point theory. Its nonlinear extension due to Boyd and Wong \cite{BoydWong1969} showed that a mapping \(T\) on a complete metric space \((X,d)\) satisfying
\begin{equation}
d(Tx,Ty)\le \psi(d(x,y))\qquad \forall x,y\in X,
\end{equation}
where \(\psi:[0,\infty)\to[0,\infty)\) is right upper semicontinuous and \(\psi(t)<t\) for all \(t>0\), possesses a unique fixed point and the iterates converge to it. This result has become a standard tool and has been generalized in various directions.

A substantial further generalization was introduced by Kirk \cite{Kirk2003}, who considered \emph{asymptotic contractions}: there exists a sequence of functions \(\psi_n:[0,\infty)\to[0,\infty)\) such that
\begin{equation}
d(T^n x,T^n y)\le \psi_n(d(x,y))\qquad\forall x,y\in X,\;\forall n\in\mathbb N,
\end{equation}
and \(\psi_n\) converges uniformly on \([0,\infty)\) to a Boyd--Wong function \(\psi\). Kirk proved that if \(T\) is continuous and some orbit is bounded, then \(T\) has a unique fixed point and all iterates converge to it. The requirement of uniform convergence on the whole half-line, however, is rather restrictive, and the estimate uses only the single distance \(d(x,y)\), ignoring other naturally occurring distances that could lead to finer estimates.

A different line of improvement originates from the work of \'{C}iri\'{c} \cite{Ciric1974}, who observed that by involving several strategically chosen distances in the contraction condition one can substantially improve the quality of fixed point theorems. Typical quasi-metrics introduced by \'{C}iri\'{c} use minimum and maximum of several distances, providing a more adaptive comparison between the images and the original points. This philosophy has inspired numerous extensions in metric fixed point theory.

Parallel to these developments, random functional analysis and random fixed point theory have attracted considerable attention. Starting with the pioneering works of \v{S}pa\v{c}ek and Hans, random operator equations were studied; comprehensive surveys can be found in Bharucha-Reid \cite{BharuchaReid1976} and Shahzad \cite{Shahzad2001}. A powerful algebraic and topological framework for random analysis is provided by the theory of random normed modules (RN modules) developed by Guo \cite{Guo1999,Guo2010}. In RN modules, the norm of an element is an \(L^0\)-valued random variable and the \((\epsilon,\lambda)\)-topology generalizes convergence in probability. This setting allows for a natural fiber-wise decomposition: under suitable \(\sigma\)-stability and local property conditions, a random mapping can be disassembled into a measurable family of deterministic mappings acting on the fibers of the space.

Recently, Sun, Guo and Tu \cite{SunGuoTu2025} extended the Goebel--Kirk theorem to random asymptotically nonexpansive mappings in RN modules, exploiting the fiber decomposition technique. Their work demonstrates the effectiveness of the \(\sigma\)-stability approach for transferring deterministic iterative fixed point arguments to the random setting.

In the present paper we develop a new class of random contractions that overcomes the limitations of Kirk's condition by combining the ideas of \'{C}iri\'{c}-type quasi-metrics with the asymptotic bounds of Boyd--Wong type inside RN modules. For a mapping \(f\) on a set \(G\) we define the random lower quasi-metric \(L_f\) and the random upper quasi-metric \(U_f\) pointwise; see \eqref{eq:rPT}--\eqref{eq:rUT} below for the precise definitions. The lower quasi-metric \(L_f\) adaptively selects the best possible left-hand side: when the four-point minimum is positive, it uses that smaller value; otherwise it falls back to the ordinary one-step distance. The upper quasi-metric \(U_f\) provides a robust comparison argument. A map \(f\) is said to be a \emph{random relaxed asymptotic contraction} if there exist random bounding functions \(\Psi_n(\omega,\cdot)\) that converge locally uniformly almost surely to a Boyd--Wong function \(\psi\), and
\begin{equation}
L_f(f^n x,f^n y)(\omega)\le \Psi_n\bigl(\omega,\, U_f(x,y)(\omega)\bigr)\qquad\text{a.s.}
\end{equation}
Under natural compatibility conditions (\(f\) has the local property, is pointwise and respects \(\sigma\)-stability) and the assumption that \(G\) is \(\sigma\)-stable, \(L^0\)-closed and essentially bounded, we prove that \(f\) possesses a unique random fixed point and that all iterates converge in the \((\epsilon,\lambda)\)-topology (Theorem \ref{thm:main}). The proof no longer separates into a deterministic theorem and a random part, but directly works in the random setting. It first uses the fiber representation to reduce the problem to the fibers, then performs a complete iterative analysis on each fiber (which essentially contains a new deterministic fixed point argument), and finally glues the fiber-wise fixed points together. This unified treatment highlights the power of the \(\sigma\)-stability method.

Our result simultaneously extends the random Kirk-type theorem (Corollary \ref{cor:kirk}), covers several deterministic fixed point results (Remark \ref{rem:det}), and provides a flexible framework for contracting-type conditions that involve multiple distances.

The paper is organized as follows. Section~2 collects the necessary preliminaries on random normed modules, \(\sigma\)-stability, the fibre representation theorem, and Boyd--Wong functions. Section~3 contains the detailed definition of random relaxed asymptotic contractions and the proof of the main random fixed point theorem. Section~4 presents corollaries that illustrate the scope of the result, and Section~5 concludes the paper.

\section{Preliminaries}

Let \((\Omega,\mathcal F,P)\) be a probability space.
\(L^0(\mathcal F,\mathbb K)\) denotes the algebra of equivalence classes of \(\mathbb K\)-valued (\(\mathbb K=\mathbb R\) or \(\mathbb C\)) random variables, and \(L^0_+(\mathcal F)\) the set of nonnegative ones.
For \(A\in\mathcal F\), \(\Ibar_A\) is the equivalence class of the indicator function \(\1_A\).

\begin{definition}[Random normed module]\label{def:RN}
A \emph{random normed module} (RN module) over \(\mathbb K\) with base \((\Omega,\mathcal F,P)\) is a pair \((E,\|\cdot\|)\) such that \(E\) is a left \(L^0(\mathcal F,\mathbb K)\)-module and \(\|\cdot\|:E\to L^0_+(\mathcal F)\) satisfies, for all \(x,y\in E\) and \(\xi\in L^0(\mathcal F,\mathbb K)\):
\begin{enumerate}
\item \(\|x\|=0\) iff \(x=\theta\) (the zero element);
\item \(\|\xi x\|=|\xi|\,\|x\|\);
\item \(\|x+y\|\le \|x\|+\|y\|\).
\end{enumerate}
We assume that \(E\) is complete with respect to the \((\epsilon,\lambda)\)-topology defined below.
\end{definition}

The \((\epsilon,\lambda)\)-topology is the linear topology generated by the neighbourhoods
\begin{equation}
N_\theta(\epsilon,\lambda)=\{x\in E: P(\|x\|<\epsilon)\ge 1-\lambda\},\qquad \epsilon>0,\;0<\lambda<1.
\end{equation}
A sequence \(\{x_n\}\) converges to \(x\) in this topology if and only if \(\|x_n-x\|\) converges to \(0\) in probability.
This topology is metrizable and makes \(E\) a topological \(L^0\)-module.

\begin{definition}[\(\sigma\)-stability, local property]\label{def:sigma}
Let \(G\subset E\).
\begin{itemize}
\item \(G\) is \emph{\(\sigma\)-stable} if for every countable measurable partition \(\{A_n\}\) of \(\Omega\) and every sequence \(\{x_n\}\subset G\) there exists a unique element \(x\in G\), denoted \(\sum_n \Ibar_{A_n}x_n\), such that \(\Ibar_{A_n}x = \Ibar_{A_n}x_n\) for all \(n\).
\item A set \(G\) is \emph{stable} if \(\Ibar_A x\in G\) for all \(A\in\mathcal F\) and \(x\in G\).
\item A map \(f:G\to E\) has the \emph{local property} if \(G\) is stable and \(\Ibar_A f(x)=f(\Ibar_A x)\) for all \(A\in\mathcal F\) and \(x\in G\).
\item \(G\) is \emph{\(L^0\)-closed} if it is closed in the \((\epsilon,\lambda)\)-topology.
\end{itemize}
\end{definition}

If \(G\) is \(\sigma\)-stable and contains \(\theta\), then \(G\) is automatically stable.
The following fibre representation theorem is fundamental. It can be obtained by known results in RN module theory (see, e.g., \cite{Guo2010, SunGuoTu2025}). We state it in the exact form needed later.

\begin{proposition}[Fibre representation]\label{prop:fibre}
Let \(G\subset E\) be \(\sigma\)-stable, \(L^0\)-closed and contain \(\theta\). Let \(f:G\to G\) have the local property and be pointwise, i.e., there exists an \(\mathcal F\otimes\mathcal B(\mathbb R)\)-measurable function \(\tilde f:\Omega\times\mathbb R\to\mathbb R\) such that for every \(x\in G\) and a.e. \(\omega\), \(f(x)(\omega)=\tilde f(\omega,\tilde x(\omega))\).
Then there exist a measurable family of complete metric spaces \((G_\omega,d_\omega)_{\omega\in\Omega}\) and maps \(f_\omega:G_\omega\to G_\omega\) such that:
\begin{enumerate}
\item Each \(x\in G\) corresponds to a measurable section \(\omega\mapsto x(\omega)\in G_\omega\) with \(\|x\|(\omega)=d_\omega(x(\omega),\theta_\omega)\) a.e., where \(\theta_\omega\) is the zero element of \(G_\omega\).
\item A sequence \(\{x_n\}\subset G\) converges to \(x\) in the \((\epsilon,\lambda)\)-topology iff \(d_\omega(x_n(\omega),x(\omega))\to 0\) in probability.
\item \((f(x))(\omega)=f_\omega(x(\omega))\) for a.e. \(\omega\).
\item For a.e. \(\omega\), \(G_\omega=\{x(\omega):x\in G\}\).
\end{enumerate}
\end{proposition}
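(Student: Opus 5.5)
The plan is to build the fibres concretely from representatives of the elements of \(G\), using the fact that \(f\) is pointwise to define \(f_\omega\). Concretely, I would fix for each \(x\in G\) a representative \(\tilde x\) of its equivalence class. For each \(\omega\) I would define \(G_\omega\) as the closure, in the fibre of the ambient space, of \(\{\tilde x(\omega):x\in G\}\). I would equip it with \(d_\omega(a,b)=|a-b|\), or more generally the fibre norm distance, which is the natural reading of the statement since \(\tilde f\) acts on \(\mathbb R\). The map would be \(f_\omega(a)=\tilde f(\omega,a)\). Item~1 then holds by construction, because \(\|x\|=|\tilde x|\) a.e. in the pointwise model. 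Item~3 is exactly the pointwise hypothesis \(f(x)(\omega)=\tilde f(\omega,\tilde x(\omega))\). Item~2 is the stated characterization of \((\epsilon,\lambda)\)-convergence as convergence in probability of \(\|x_n-x\|\), rewritten fibrewise via item~1 applied to \(x_n-x\).

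The substantive points are completeness of \(G_\omega\), item~4, and the invariance \(f_\omega(G_\omega)\subset G_\omega\).

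\emph{Invariance.} Since \(f(G)\subset G\), each \(\tilde f(\omega,\tilde x(\omega))\) agrees a.e. with a representative of \(f(x)\in G\). So the image of the dense part lands in \(G_\omega\) off a null set. However, that null set depends on \(x\), and \(G\) is typically uncountable.

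\emph{Handling uncountability.} I would use \(\sigma\)-stability together with \(L^0\)-closedness to pass to a countable subfamily \(\{x_k\}\subset G\) that is dense in the \((\epsilon,\lambda)\)-topology. I would then define \(G_\omega\) as the closure of \(\{\tilde x_k(\omega)\}_k\), discarding a single null set. Completeness then comes for free, because \(G_\omega\) is a closed subset of a complete space.

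\emph{Item 4.} To show that every \(x\in G\) satisfies \(\tilde x(\omega)\in G_\omega\) a.e., I would approximate \(x\) by a sequence from \(\{x_k\}\) converging in probability. Passing to an a.s. convergent subsequence gives \(\tilde x(\omega)\in G_\omega\) a.e.

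\emph{Reverse inclusion.} Every point of \(G_\omega\) should be attained by some section. Given a measurable selection of Cauchy sequences, I would glue countably many \(x_k\) along a partition via \(\sigma\)-stability, \(\sum_n\Ibar_{A_n}x_{k_n}\). This produces elements of \(G\) whose fibre values approximate any prescribed point. \(L^0\)-closedness, used through the local property to justify the gluing, then yields a limit in \(G\).

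Invariance of \(f_\omega\) on the closure requires some regularity of \(\tilde f(\omega,\cdot)\). The local property gives \(f(\sum\Ibar_{A_n}x_n)=\sum\Ibar_{A_n}f(x_n)\), so \(f\) commutes with gluing. Together with item~4 applied to \(f(x)\in G\), this gives \(f_\omega(\tilde x(\omega))\in G_\omega\) for sections. I would then either restrict \(G_\omega\) to the set of section values, which is what item~4 literally asserts a.e., or argue that this set is already closed.

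The \textbf{main obstacle} is measurability and separability. The \((\epsilon,\lambda)\)-topology on \(G\) must be separable, or a countable-dense-family substitute must be extracted from \(\sigma\)-stability, so that all a.e. statements can be made off one common null set. I would try first to reduce to a separable \(\sigma\)-stable hull of countably many elements. If that fails, I would appeal to the cited representation results in \cite{Guo2010,SunGuoTu2025}, where this measurable-family construction is carried out.
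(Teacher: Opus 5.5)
\textbf{There is a genuine gap.} It sits exactly at the obstacle you flag, and neither of your proposed ways around it works as stated.

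\emph{(i) No countable dense family.} \(\sigma\)-stability together with \(L^0\)-closedness does not yield a countable \((\epsilon,\lambda)\)-dense subfamily of \(G\). Take \(\Omega=\{0,1\}^{[0,1]}\) with the product coin measure and the coordinate events \(A_i=\{\omega:\omega_i=1\}\). Then \(P(\|\Ibar_{A_i}-\Ibar_{A_j}\|\ge 1)=1/2\) for \(i\neq j\). Hence \(G=\{x:|x|\le 1\text{ a.s.}\}\), which satisfies every hypothesis, is not separable. Neither is the \(\sigma\)-stable hull of the two constants \(0,1\), so a "separable \(\sigma\)-stable hull" is not available either.

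What replaces density is an essential-infimum argument of Hiai--Umegaki type. For each rational \(q\), pick \(x_{q,m}\in G\) with \(\inf_m|q-\tilde x_{q,m}|=\mathrm{ess\,inf}_{x\in G}|q-x|\) a.s. This is possible because any essential infimum is realized by a countable subfamily. Put \(G_\omega:=\overline{\{\tilde x_{q,m}(\omega)\}}_{q,m}\).
\begin{itemize}
\item For each \(x\in G\), a.e. \(\omega\) satisfies \(d(q,G_\omega)\le|q-\tilde x(\omega)|\) for all rational \(q\). This forces \(\tilde x(\omega)\in G_\omega\).
\item Conversely, every measurable selection of \(\omega\mapsto G_\omega\) lies in \(G\), by exactly your gluing-plus-closedness argument.
\end{itemize}
So \(G\) is the set of a.e. selections of \(G_\omega\), and this is the correct a.e. form of item~4. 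The literal set \(\{\tilde x(\omega):x\in G\}\) at a fixed \(\omega\) is not determined by \(G\), because representatives of uncountably many elements can be altered on null sets.

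\emph{(ii) Invariance \(f_\omega(G_\omega)\subset G_\omega\) is left open.} Your argument only gives \(\tilde f(\omega,\tilde x(\omega))\in G_\omega\) off a null set depending on \(x\). That covers at best the countably many points \(\tilde x_{q,m}(\omega)\), not their closure. Shrinking \(G_\omega\) to "section values" does not help, for the reason just given.

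No regularity of \(\tilde f(\omega,\cdot)\) is needed; the missing idea is measurable selection. The set \(B=\{(\omega,a): a\in G_\omega,\ \tilde f(\omega,a)\notin G_\omega\}\) is \(\mathcal F\otimes\mathcal B(\mathbb R)\)-measurable, since both conditions are expressed through \(\inf_{q,m}|\cdot-\tilde x_{q,m}(\omega)|\). Suppose its projection \(A\) onto \(\Omega\) were non-null.
\begin{itemize}
\item After completing \(P\), the von Neumann--Aumann theorem gives a measurable \(s\) with \((\omega,s(\omega))\in B\) on \(A\).
\item For any fixed \(x\in G\), the element \(y:=\Ibar_A s+\Ibar_{A^c}x\) is a selection of \(G_\omega\), so \(y\in G\).
\item But \(f(y)(\omega)=\tilde f(\omega,s(\omega))\notin G_\omega\) for a.e. \(\omega\in A\), contradicting \(f(y)\in G\).
\end{itemize}

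With these two ingredients your skeleton proves the proposition: \(d_\omega(a,b)=|a-b|\), \(f_\omega=\tilde f(\omega,\cdot)|_{G_\omega}\), completeness from closedness in \(\mathbb R\), and measurability of the closed-valued map \(\omega\mapsto G_\omega\).

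For comparison, the paper's sketch takes \(G_\omega\) to be all pointwise values and \(d_\omega\) an infimum of \(\|x-y\|(\omega)\). That runs into the same uncountable-null-set problem, and the paper then defers to the cited literature. Your final fallback therefore coincides with what the paper actually does. As a self-contained proof, however, your proposal is missing the two steps above.
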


For completeness we sketch the construction.
For each \(x\in G\), fix a representative \(\tilde x:\Omega\to\mathbb R\).
Set \(G_\omega=\{\tilde x(\omega):x\in G\}\) and define
\(d_\omega(u,v)=\inf\{\|x-y\|(\omega): x,y\in G,\ \tilde x(\omega)=u,\ \tilde y(\omega)=v\}\).
The properties of the random norm and \(L^0\)-closedness guarantee that \((G_\omega,d_\omega)\) is a complete metric space, and the local property of \(f\) yields well-defined fibre maps.
The reader may consult \cite[Theorem 3.4]{Guo2010} or \cite[Proposition 2.1]{SunGuoTu2025} for a detailed proof.

\begin{definition}[Boyd--Wong function]\label{def:psi}
A function \(\psi:[0,\infty)\to[0,\infty)\) is called a \emph{Boyd--Wong function} if it is nondecreasing, right upper semicontinuous, and satisfies \(\psi(t)<t\) for all \(t>0\).
\end{definition}

When taking limsups, it is convenient to replace \(\psi\) by its monotone majorant.

\begin{definition}
Let \(\psi\) be a Boyd--Wong function. Define \(g:[0,\infty)\to[0,\infty)\) by
\begin{equation}
g(t):=\sup_{0\le s\le t}\psi(s).
\end{equation}
\end{definition}

\begin{lemma}[Properties of the majorant]\label{lem:g}
\(g\) is nondecreasing, right continuous, \(g(0)=0\), \(g(t)\le t\) for all \(t\), and \(g(t)<t\) for all \(t>0\).
\end{lemma}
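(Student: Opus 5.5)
The plan is to observe first that, since a Boyd--Wong function \(\psi\) is nondecreasing by Definition~\ref{def:psi}, the supremum defining \(g\) is attained at the right endpoint. For \(0\le s\le t\) we have \(\psi(s)\le\psi(t)\), and \(s=t\) is admissible, so
\begin{equation*}
g(t)=\sup_{0\le s\le t}\psi(s)=\psi(t)\qquad\text{for all } t\ge 0 .
\end{equation*}
Thus \(g=\psi\). Every claimed property then reduces to a property of \(\psi\) itself, and the proof becomes a short verification.

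The steps, in order, are as follows. Monotonicity of \(g\) is immediate, since \(g=\psi\) (or directly, the supremum is taken over a growing family of sets). The inequality \(g(t)<t\) for \(t>0\) is exactly the Boyd--Wong condition \(\psi(t)<t\). For \(g(0)=0\), I would use monotonicity: for every \(t>0\), \(0\le\psi(0)\le\psi(t)<t\), and letting \(t\downarrow 0\) gives \(\psi(0)=0\). Hence \(g(t)\le t\) holds for all \(t\ge 0\): with equality at \(t=0\), and strictly for \(t>0\).

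The only step needing an argument is right continuity, and I expect it to be the (mild) main point. Fix \(t\ge 0\). Monotonicity gives \(\psi(t)\le\psi(s)\) for \(s>t\), so \(\psi(t)\le\liminf_{s\downarrow t}\psi(s)\). Right upper semicontinuity gives \(\limsup_{s\downarrow t}\psi(s)\le\psi(t)\). Combining the two, \(\lim_{s\downarrow t}g(s)=\lim_{s\downarrow t}\psi(s)=\psi(t)=g(t)\). So the lower half of right continuity comes for free from monotonicity, and the upper half is precisely the semicontinuity hypothesis.

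If one preferred not to exploit \(g=\psi\), for instance to remain valid for non-monotone \(\psi\) as a remark, the same scheme would work with more effort. Monotonicity and \(g(0)=\psi(0)\) are direct. The bound \(g(t)<t\) would then require that the supremum of the upper semicontinuous function \(\psi\) over the compact interval \([0,t]\) be attained and be \(<t\), which needs two-sided upper semicontinuity. Since the paper's definition includes monotonicity, I will use the short route above.
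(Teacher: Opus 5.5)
Your proof is correct, but it takes a different route from the paper. The paper never uses the identity \(g=\psi\); it treats \(g\) as a genuine supremum. It obtains \(g(t)\le t\) termwise. For strictness it chooses \(s_n\in[0,t]\) with \(\psi(s_n)\to t\), which forces \(s_n\to t\), and then appeals to upper semicontinuity at \(t\). It proves right continuity by splitting \(\sup_{0\le s\le t}\psi(s)\) into the parts over \([0,t_0]\) and \((t_0,t]\), and controls the latter by right upper semicontinuity.

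Your observation that monotonicity makes the supremum attained at the right endpoint reduces every claim to a property of \(\psi\) itself. It also shows exactly where monotonicity is needed. In the paper's strictness step the \(s_n\) approach \(t\) from the left, so right upper semicontinuity does not by itself give \(\limsup_n\psi(s_n)\le\psi(t)\); monotonicity, \(\psi(s_n)\le\psi(t)\), is what makes that step valid. Likewise \(g(0)=0\), which the paper calls clear, amounts to \(\psi(0)=0\), and you correctly derive it from \(0\le\psi(0)\le\psi(t)<t\).

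Your closing remark is also accurate. For a non-monotone, merely right upper semicontinuous \(\psi\), the strict bound can fail: take \(\psi(s)=s^2\) on \([0,1)\), \(\psi(1)=1/2\), and \(\psi(s)=s/2\) for \(s>1\); then \(g(1)=1\).

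The paper's sup-based argument therefore gives at most a template that could be adapted to non-monotone \(\psi\) under additional hypotheses. Yours is shorter, and it shows that under Definition~\ref{def:psi} the majorant is redundant, so Lemma~\ref{lem:limsup} could be applied to \(\psi\) directly.
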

\begin{proof}
Monotonicity and \(g(0)=0\) are clear.
For \(t>0\) and any \(s\in[0,t]\), \(\psi(s)<s\le t\); hence \(\psi(s)<t\) and taking supremum yields \(g(t)\le t\).
If equality held, there would be a sequence \(s_n\in[0,t]\) with \(\psi(s_n)\to t\).
Since \(\psi(s_n)<s_n\le t\), we must have \(s_n\to t\).
By right upper semicontinuity of \(\psi\) at \(t\), \(\limsup_n\psi(s_n)\le\psi(t)\), which gives \(t\le\psi(t)\), contradicting \(\psi(t)<t\).
Thus \(g(t)<t\) for \(t>0\).

For right continuity, fix \(t_0\ge0\).
Because \(g\) is nondecreasing, the right limit \(g(t_0^+):=\inf_{t>t_0}g(t)\) exists and satisfies \(g(t_0)\le g(t_0^+)\).
Let \(\varepsilon>0\). By right upper semicontinuity of \(\psi\) at \(t_0\), there exists \(\delta>0\) such that \(\psi(s)\le\psi(t_0)+\varepsilon\le g(t_0)+\varepsilon\) for all \(s\in[t_0,t_0+\delta]\).
Then for any \(t\in(t_0,t_0+\delta)\),
\begin{equation}
g(t)=\max\Bigl\{\sup_{0\le s\le t_0}\psi(s),\ \sup_{t_0<s\le t}\psi(s)\Bigr\}\le\max\{g(t_0),\,g(t_0)+\varepsilon\}=g(t_0)+\varepsilon.
\end{equation}
Taking the infimum over \(t\in(t_0,t_0+\delta)\) gives \(g(t_0^+)\le g(t_0)+\varepsilon\), and letting \(\varepsilon\to0^+\) yields \(g(t_0^+)=g(t_0)\).
Thus \(g\) is right continuous.
\end{proof}

\begin{lemma}[Limsup exchange]\label{lem:limsup}
Let \(\{a_n\}\) be a bounded nonnegative sequence and \(g\) nondecreasing and right continuous. Then
\begin{equation}
\limsup_{n\to\infty} g(a_n)\le g\Bigl(\limsup_{n\to\infty} a_n\Bigr).
\end{equation}
\end{lemma}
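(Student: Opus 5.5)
Set \(L:=\limsup_{n\to\infty}a_n\), which is a finite nonnegative number because \(\{a_n\}\) is bounded and nonnegative. The plan is to use monotonicity of \(g\) to reduce the claim to bounding \(g\) at points slightly to the right of \(L\), and then to let those points decrease to \(L\) using right continuity. Only these two properties of \(g\) are needed. Boundedness serves only to ensure that \(L\) lies in the domain \([0,\infty)\) of \(g\), and the same argument gives the inequality in the stated direction without further work.

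\emph{Step 1 (eventual bound).} Fix \(\delta>0\). By the definition of the limit superior there is \(N=N(\delta)\) with \(a_n< L+\delta\) for all \(n\ge N\). Since \(g\) is nondecreasing, \(g(a_n)\le g(L+\delta)\) for all \(n\ge N\). Hence
\begin{equation}
\limsup_{n\to\infty} g(a_n)\le \sup_{n\ge N} g(a_n)\le g(L+\delta).
\end{equation}

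\emph{Step 2 (pass to the limit).} The left-hand side of Step 1 does not depend on \(\delta\). We therefore let \(\delta\downarrow0\). Right continuity of \(g\) at \(L\) gives \(g(L+\delta)\to g(L)\), and so
\begin{equation}
\limsup_{n\to\infty} g(a_n)\le g(L)=g\Bigl(\limsup_{n\to\infty}a_n\Bigr).
\end{equation}

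The only point that needs care is that the bound \(a_n<L+\delta\) approaches \(L\) from the right, which is why right continuity of \(g\) is exactly the hypothesis required. If one used the bound \(a_n\le L\) instead, the argument would fail, since \(a_n\) may exceed \(L\) for infinitely many \(n\).
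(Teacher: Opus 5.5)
Your proof is correct and follows the paper's argument exactly: bound $a_n < L+\delta$ eventually, use monotonicity to get $\limsup_n g(a_n)\le g(L+\delta)$, then let $\delta\to0^+$ and apply right continuity of $g$ at $L$.
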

\begin{proof}
Set \(L=\limsup_{n} a_n\).
For any \(\varepsilon>0\) there exists \(N\) such that \(a_n<L+\varepsilon\) for all \(n\ge N\).
Since \(g\) is nondecreasing, \(g(a_n)\le g(L+\varepsilon)\) for all \(n\ge N\). Taking limsup yields \(\limsup_n g(a_n)\le g(L+\varepsilon)\).
Letting \(\varepsilon\to0^+\) and using right continuity of \(g\) gives the result.
\end{proof}

\section{Main result}

Let \((E,\|\cdot\|)\) be a complete RN module over \(\mathbb K\) with base \((\Omega,\mathcal F,P)\).
Let \(G\subset E\) be nonempty, \(L^0\)-closed, \(\sigma\)-stable, contain \(\theta\), and be \emph{essentially bounded}: there exists \(M>0\) such that \(\|g\|\le M\) almost surely for all \(g\in G\). (Essential boundedness guarantees that every fibre \(G_\omega\) has finite diameter uniformly.)

\subsection{Random relaxed asymptotic contractions}

We begin by introducing the random quasi-metrics that will be used in the contraction condition. For a map \(f:G\to G\) and for \(x,y\in G\), define the following \(L^0\)-valued random variables pointwise for \(\omega\in\Omega\):
\begin{align}
P_f(x,y)(\omega) &:= \min\bigl\{\|x-y\|(\omega),\, \|x-f(y)\|(\omega),\, \|f(x)-y\|(\omega),\, \|f(x)-f(y)\|(\omega)\bigr\},\label{eq:rPT}\\
L_f(x,y)(\omega) &:= 
\begin{cases}
P_f(x,y)(\omega), & \text{if } P_f(x,y)(\omega) > 0,\\[4pt]
\|f(x)-f(y)\|(\omega), & \text{if } P_f(x,y)(\omega) = 0,
\end{cases}\label{eq:rLT}\\
U_f(x,y)(\omega) &:= \max\bigl\{\|x-y\|(\omega),\, \|f(x)-x\|(\omega),\, \|f(y)-y\|(\omega),\, \|f(x)-f(y)\|(\omega)\bigr\}.\label{eq:rUT}
\end{align}
These expressions are well defined because the random norm takes values in \(L^0_+(\mathcal F)\). The lower quasi-metric \(L_f\) switches adaptively: when the four-point minimum \(P_f\) is positive, it uses that smaller value; otherwise it uses the direct distance between images. The upper quasi-metric \(U_f\) gives a robust comparison by taking the maximum of four relevant distances. Basic properties analogous to the deterministic case hold almost surely; in particular,
\begin{equation}
L_f(x,y)\le \|f(x)-f(y)\| \quad\text{and}\quad \|x-y\|\le U_f(x,y) \qquad\text{a.s.}
\end{equation}

For later use in the fibre-wise iteration we also record the following elementary estimate.

\begin{lemma}[Safe estimate]\label{lem:safe}
Let \((X,d)\) be a metric space and \(T:X\to X\). For any \(u,v\in X\), if the auxiliary quantity
\[
P_T(u,v):=\min\{d(u,v),\,d(u,Tv),\,d(Tu,v),\,d(Tu,Tv)\}
\]
is positive, then
\begin{equation}
d(Tu,Tv)\le P_T(u,v)+d(Tu,u)+d(Tv,v).
\end{equation}
\end{lemma}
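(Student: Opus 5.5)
The estimate holds in every case, so the plan is to reduce it to the triangle inequality. We split according to which of the four distances realizes the minimum \(P_T(u,v)\). In each case we bound \(d(Tu,Tv)\) by that distance plus the "displacement" terms \(d(Tu,u)\) and \(d(Tv,v)\). The positivity hypothesis is never actually used; it is only recorded because the lemma is applied in the branch of \eqref{eq:rLT} where \(P_f>0\).

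The four cases go as follows.

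If \(P_T(u,v)=d(Tu,Tv)\), the claim is immediate, since the extra terms are nonnegative.

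If \(P_T(u,v)=d(u,v)\), we use
\[
d(Tu,Tv)\le d(Tu,u)+d(u,v)+d(v,Tv).
\]

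If \(P_T(u,v)=d(u,Tv)\), we use
\[
d(Tu,Tv)\le d(Tu,u)+d(u,Tv),
\]
which is even sharper than required.

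If \(P_T(u,v)=d(Tu,v)\), symmetrically we use
\[
d(Tu,Tv)\le d(Tu,v)+d(v,Tv).
\]

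In every case the right-hand side is at most \(P_T(u,v)+d(Tu,u)+d(Tv,v)\), which is the claim.

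There is no real obstacle here. The only point needing care is to make sure each case uses the chain through the correct intermediate points, so that the displacement terms that appear are exactly \(d(Tu,u)\) and \(d(Tv,v)\).
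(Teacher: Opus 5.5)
Your proof is correct and follows the same approach as the paper: split into cases according to which of the four distances attains the minimum \(P_T(u,v)\), and apply the triangle inequality in each case. You are also right that positivity of \(P_T(u,v)\) is never used; the paper states that hypothesis only because the lemma is applied in the branch of the switching rule where \(P_T(u,v)>0\).
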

\begin{proof}
By definition, \(P_T(u,v)\) equals one of the four distances. The inequality follows by examining each case and applying the triangle inequality, exactly as in the classical \'{C}iri\'{c}-type argument.
\end{proof}

\begin{definition}[Random relaxed asymptotic contraction]\label{def:RRAC}
A map \(f:G\to G\) is called a \emph{random relaxed asymptotic contraction} (with respect to a Boyd--Wong function \(\psi\)) if the following hold:
\begin{enumerate}
\item \textbf{Structural compatibility.} \(f\) has the local property, is pointwise (as required in Proposition~\ref{prop:fibre}), and respects \(\sigma\)-stability: for any measurable partition \(\{A_n\}\) and sequence \(\{x_n\}\subset G\),
\begin{equation}
f\Bigl(\sum_n \Ibar_{A_n}x_n\Bigr) = \sum_n \Ibar_{A_n}f(x_n).
\end{equation}
\item \textbf{Local uniform convergence almost surely with a common exceptional set.} There exists a sequence of measurable functions \(\Psi_n:\Omega\times[0,\infty)\to[0,\infty)\) (where \(\Psi_n(\cdot,t)\) is measurable for each \(t\) and \(\Psi_n(\omega,\cdot)\) is nondecreasing), and a set \(\Omega_0\subset\Omega\) with \(P(\Omega_0)=1\) such that:
\begin{itemize}
\item For every \(\omega\in\Omega_0\), for every \(n\) and all \(x,y\in G\),
\begin{equation}
L_f(f^n x,f^n y)(\omega) \le \Psi_n\bigl(\omega,\, U_f(x,y)(\omega)\bigr); \label{eq:rRAC}
\end{equation}
\item For every \(\omega\in\Omega_0\), \(\Psi_n(\omega,\cdot)\to \psi(\cdot)\) uniformly on every bounded subset of \([0,\infty)\) as \(n\to\infty\).
\end{itemize}
\item \textbf{Fibre-wise continuity.} For a.e. \(\omega\), the map \(x\mapsto f(x)(\omega)\) is continuous from \((G,\|\cdot\|(\omega))\) to itself.
\end{enumerate}
\end{definition}

\begin{remark}
The strengthening in condition~(2) --- requiring the contraction inequality and the convergence to hold on a \emph{common} set \(\Omega_0\) of full probability \emph{for all} \(x,y\in G\) simultaneously --- is essential for the fibre decomposition argument that follows. Without it, one could only guarantee the inequality fibre-wise for pairs coming from a countable dense subset, which would be insufficient for the iterative analysis on the whole fibre. This is a typical situation in random fixed point theory when one wants to pass from random inequalities to deterministic fibre inequalities. The pointwise condition in (1) is a strong but natural measurability assumption which guarantees that the fibre maps \(f_\omega\) are well defined; it holds, for example, when \(f\) is induced by a Carath\'eodory function or when the RN module possesses a suitable lifting.
\end{remark}

\subsection{Random fixed point theorem}

\begin{theorem}\label{thm:main}
Let \(G\) and \(f\) be as above, i.e., \(G\) is nonempty, \(L^0\)-closed, \(\sigma\)-stable, essentially bounded, and \(f:G\to G\) is a random relaxed asymptotic contraction. Then \(f\) has a unique fixed point \(z\in G\). Moreover, for every \(x\in G\) the iterates \(f^n x\) converge to \(z\) in the \((\epsilon,\lambda)\)-topology.
\end{theorem}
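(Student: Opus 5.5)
The proof will go through the fibre representation. By Proposition~\ref{prop:fibre} and condition~(1) of Definition~\ref{def:RRAC}, we pass to the measurable family $(G_\omega,d_\omega)$ with fibre maps $f_\omega$. On the common full-measure set $\Omega_0$ of condition~(2), intersected with the full-measure sets where Proposition~\ref{prop:fibre}(3)--(4) and condition~(3) hold, every pair $u,v\in G_\omega$ is of the form $u=x(\omega)$, $v=y(\omega)$. Hence the contraction inequality \eqref{eq:rRAC} becomes a deterministic statement:
\[
L_{f_\omega}(f_\omega^n u,f_\omega^n v)\le \Psi_n(\omega,U_{f_\omega}(u,v)),
\]
with $\Psi_n(\omega,\cdot)\to\psi$ locally uniformly. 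Essential boundedness gives $\diam G_\omega\le 2M$, so all arguments of $\Psi_n$ lie in $[0,4M]$, where the convergence is uniform. Fix such an $\omega$ and write $T=f_\omega$, $d=d_\omega$.

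\textbf{Fibre-wise step.} The first aim is asymptotic regularity, $d(T^{n+1}u,T^nu)\to0$. Apply the inequality to the pair $(T^m u, T^{m+1}u)$ with shift $n$. Let
\[
a=\limsup_k d(T^{k+1}u,T^ku),\qquad b=\limsup_{k,l}d(T^ku,T^lu),
\]
both finite by boundedness. The upper quasi-metric $U_T(T^m u,T^{m+1}u)$ involves only consecutive and two-step distances, so its limsup is controlled by $\max(a,\,2a)$, or more crudely by $b$.

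The lower quasi-metric is treated by cases. If $P_T=0$, then $L_T=d(T^{n+m+1}u,T^{n+m+2}u)$ is exactly the quantity we want. If $P_T>0$, Lemma~\ref{lem:safe} gives
\[
d(T\cdot,T\cdot)\le L_T+d(T\cdot,\cdot)+d(T\cdot,\cdot).
\]
The aim is to show that along orbit pairs the relevant minimum is comparable to $\|T^{k}u-T^{l}u\|$ up to consecutive-step errors.

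From this we derive the Cauchy estimate. Taking limsups and using uniform convergence, Lemma~\ref{lem:g} and Lemma~\ref{lem:limsup} should give $b\le g(b)$ plus error terms involving $a$. Once $a=0$ is known, this yields $b\le g(b)$, hence $b=0$ because $g(t)<t$ for $t>0$. So $\{T^nu\}$ is Cauchy and converges to some $z_\omega$ in the complete space $G_\omega$. Continuity of $T$ (condition~(3)) gives $Tz_\omega=z_\omega$.

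For uniqueness, take two fixed points $z,z'$ with $d(z,z')>0$. Then $P_T(z,z')=d(z,z')=L_T$ and $U_T(z,z')=d(z,z')$, so $d(z,z')\le\psi(d(z,z'))<d(z,z')$, a contradiction. The same argument applied to pairs of orbits shows that every orbit converges to $z_\omega$.

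\textbf{Main obstacle.} I expect the hardest step to be proving $a=0$ itself, because of the adaptive minimum in $L_T$. When $P_T>0$, the minimum may be attained at a cross distance such as $d(T^{k}u,T^{k+2}u)$, and that is not directly comparable to consecutive steps. The plan there is to argue by contradiction. Assume $a>0$, pick a subsequence realising $a$, and use Lemma~\ref{lem:safe} together with the triangle inequality to squeeze $a\le g(\max\{a,b\})$. Combine this with the analogous inequality for $b$ to get $\max\{a,b\}\le g(\max\{a,b\})$, which forces both to be $0$.

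\textbf{Gluing.} The fibre fixed points must be assembled into an element of $G$. Since $f^n x\in G$ and $d_\omega(f^nx(\omega),z_\omega)\to0$ a.s., the sequence $\{f^nx\}$ is Cauchy in probability. By completeness and $L^0$-closedness it converges in the $(\epsilon,\lambda)$-topology to some $z\in G$ with $z(\omega)=z_\omega$ a.e., using Proposition~\ref{prop:fibre}(2). Fibre-wise continuity gives $f(z)(\omega)=z(\omega)$ a.e., so $f(z)=z$. Uniqueness in $G$ follows from uniqueness on almost every fibre. Convergence for arbitrary $x$ holds because a.s.\ convergence implies convergence in probability.
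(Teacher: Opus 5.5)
There is a genuine gap in your proof of asymptotic regularity. It comes from missing the observation that makes this step easy. Write $x_k:=T^ku$ and $d_k:=d(x_k,x_{k+1})$, so $a=\limsup_k d_k$.

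For a consecutive pair, one of the four entries of $P_T(x_k,x_{k+1})$ is $d(Tx_k,x_{k+1})=d(x_{k+1},x_{k+1})=0$. Hence $P_T(x_k,x_{k+1})=0$ for \emph{every} $k$, and the switching rule gives $L_T(x_k,x_{k+1})=d(Tx_k,Tx_{k+1})=d_{k+1}$ exactly. The case you single out as the main obstacle ($P_T>0$ with the minimum at a cross distance $d(T^ku,T^{k+2}u)$) never arises for consecutive pairs.

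Similarly, $U_T(x_n,x_{n+1})=\max\{d_n,d_n,d_{n+1},d_{n+1}\}=\max\{d_n,d_{n+1}\}$ contains no two-step distance. This exactness matters: your suggested control by $2a$ would only give $a\le g(2a)$, which does not force $a=0$.

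With these exact values, the contraction condition for the pair $(x_n,x_{n+1})$ with index $m$ reads $d_{n+m+1}\le\psi_{m,\omega}(\max\{d_n,d_{n+1}\})$. Letting $m\to\infty$ gives $a\le g(\max\{d_n,d_{n+1}\})$ for every $n$. Lemma~\ref{lem:limsup} then yields $a\le g(a)$, so $a=0$. This is step (i) of the paper's proof.

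The contradiction argument you propose instead does not close. The pairs $(k,k+1)$ are among those defining $b$, so $a\le b$ always and $\max\{a,b\}=b$. The inequality $a\le g(\max\{a,b\})$ therefore says nothing about $b$, and the target $\max\{a,b\}\le g(\max\{a,b\})$ is just $b\le g(b)$. As you note yourself, though, the Cauchy estimate only gives something like $b\le g(b+2a)+2a$. Those error terms vanish only once $a=0$ is already known, so the argument is circular.

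Once asymptotic regularity is obtained as above, the rest of your plan follows the same lines as the paper. That covers the Cauchy estimate via Lemma~\ref{lem:safe}, existence via continuity, uniqueness, convergence of every orbit, and gluing.
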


\begin{proof}
\textbf{Step 1: Fibre decomposition.}
Since \(G\) satisfies the required conditions and \(f\) has the structural compatibility, Proposition~\ref{prop:fibre} provides a measurable family of complete metric spaces \((G_\omega,d_\omega)\) and maps \(f_\omega:G_\omega\to G_\omega\) such that for a.e. \(\omega\):
\begin{itemize}
\item \(G_\omega\) is bounded (indeed, by \(2M\));
\item \((f(x))(\omega)=f_\omega(x(\omega))\) for all \(x\in G\);
\item \(\|x\|(\omega)=d_\omega(x(\omega),\theta_\omega)\);
\item a sequence \(\{x_n\}\subset G\) converges in the \((\epsilon,\lambda)\)-topology iff \(d_\omega(x_n(\omega),x(\omega))\to 0\) in probability.
\end{itemize}
Let \(\Omega_1\) be the intersection of \(\Omega_0\) (from Definition~\ref{def:RRAC}) with the full-measure set on which the fibre representation holds. Then \(P(\Omega_1)=1\).

\textbf{Step 2: Transfer of the contraction condition to fibres.}
Fix \(\omega\in\Omega_1\). For \(u,v\in G_\omega\), choose representatives \(x,y\in G\) with \(x(\omega)=u\), \(y(\omega)=v\). Define the deterministic quasi-metrics on \(G_\omega\) by
\begin{align}
P_{f_\omega}(u,v) &:= \min\bigl\{d_\omega(u,v),\,d_\omega(u,f_\omega v),\,d_\omega(f_\omega u,v),\,d_\omega(f_\omega u,f_\omega v)\bigr\},\\
L_{f_\omega}(u,v) &:= 
\begin{cases}
P_{f_\omega}(u,v), & \text{if } P_{f_\omega}(u,v)>0,\\
d_\omega(f_\omega u,f_\omega v), & \text{if } P_{f_\omega}(u,v)=0,
\end{cases}\\
U_{f_\omega}(u,v) &:= \max\bigl\{d_\omega(u,v),\,d_\omega(f_\omega u,u),\,d_\omega(f_\omega v,v),\,d_\omega(f_\omega u,f_\omega v)\bigr\}.
\end{align}
By the pointwise nature of the definitions and because \(\omega\in\Omega_1\), the contraction condition \eqref{eq:rRAC} holds for \emph{all} \(x,y\in G\) at this \(\omega\). Consequently, for every \(n\) and all \(u,v\in G_\omega\),
\begin{equation}
L_{f_\omega}(f_\omega^n u, f_\omega^n v) = L_f(f^n x,f^n y)(\omega), \qquad
U_{f_\omega}(u,v) = U_f(x,y)(\omega).
\end{equation}
Define \(\psi_{n,\omega}(t):=\Psi_n(\omega,t)\). Condition (2) of Definition~\ref{def:RRAC} yields, for all \(u,v\in G_\omega\),
\begin{equation}
L_{f_\omega}(f_\omega^n u, f_\omega^n v) \le \psi_{n,\omega}\bigl(U_{f_\omega}(u,v)\bigr), \label{eq:fibrecontract}
\end{equation}
and for every bounded interval \(I\subset[0,\infty)\), \(\sup_{t\in I}|\psi_{n,\omega}(t)-\psi(t)|\to0\) as \(n\to\infty\). Condition (3) gives continuity of \(f_\omega\).

Thus, for each \(\omega\in\Omega_1\), \(f_\omega\) is a continuous self-map of the complete bounded metric space \(G_\omega\) satisfying the contraction-type estimate \eqref{eq:fibrecontract} with bounds converging locally uniformly to a Boyd--Wong function \(\psi\).

\textbf{Step 3: Fibre-wise fixed point iteration.}
We now prove that for each such \(\omega\), \(f_\omega\) possesses a unique fixed point \(z(\omega)\in G_\omega\) and every orbit converges to it. The argument is a careful iteration analysis on the deterministic space \((G_\omega,d_\omega)\).

Fix \(\omega\in\Omega_1\) and write \(T=f_\omega\), \(X=G_\omega\), and \(d=d_\omega\) for brevity. Note that \(X\) is bounded; set \(D' = \diam(X) = \sup\{d(u,v): u,v\in X\} < \infty\). Pick an arbitrary point \(x_0\in X\) (which exists because \(G\) is nonempty; we can take a constant section) and define \(x_n:=T^n x_0\) for \(n\ge0\). Let \(d_n:=d(x_n,x_{n+1})\).

\textit{(i) Adjacent distances tend to zero.}
For any \(n\ge0\), using the definition of \(P_T\) on the pair \((x_n,x_{n+1})\),
\begin{align*}
P_T(x_n,x_{n+1})
&= \min\{d_n,\,d(x_n,x_{n+2}),\,d(x_{n+1},x_{n+1}),\,d_{n+1}\} \\
&= \min\{d_n,\,d(x_n,x_{n+2}),\,0,\,d_{n+1}\}=0.
\end{align*}
Hence by the switching rule, \(L_T(x_n,x_{n+1}) = d(Tx_n,Tx_{n+1}) = d_{n+1}\).
Applying the contractive condition \eqref{eq:fibrecontract} to the points \((x_n,x_{n+1})\) with iteration index \(m\) we obtain
\begin{equation}
L_T(T^m x_n, T^m x_{n+1}) \le \psi_{m,\omega}\bigl(U_T(x_n,x_{n+1})\bigr).
\end{equation}
But \(T^m x_n = x_{n+m}\) and \(T^m x_{n+1} = x_{n+m+1}\), and by the same computation as above \(L_T(x_{n+m},x_{n+m+1}) = d_{n+m+1}\). Moreover,
\[
U_T(x_n,x_{n+1}) = \max\{d_n,\,d_n,\,d_{n+1},\,d_{n+1}\} = \max\{d_n,d_{n+1}\}.
\]
Hence we have, for every \(n\ge0\) and every \(m\ge1\),
\begin{equation}
d_{n+m+1} \le \psi_{m,\omega}\bigl(\max\{d_n,d_{n+1}\}\bigr). \label{eq:dist_adj}
\end{equation}
Since the space is bounded, there exists \(D>0\) such that \(\max\{d_n,d_{n+1}\}\le D\) for all \(n\) (for instance, \(D=D'\)). By local uniform convergence, for any \(\varepsilon>0\) we can find \(M\) such that for all \(k\ge M\) and all \(t\in[0,D]\),
\begin{equation}
\psi_{k,\omega}(t) \le \psi(t)+\varepsilon \le g(t)+\varepsilon,
\end{equation}
where \(g\) is the monotone majorant of \(\psi\) from Lemma~\ref{lem:g}.

Now fix an arbitrary \(n\ge0\) and let \(m\to\infty\) in \eqref{eq:dist_adj}. Because the orbit is bounded, \(\max\{d_n,d_{n+1}\}\) stays within \([0,D]\). Taking limsup on both sides yields
\begin{equation}
\limsup_{k\to\infty} d_k \le \limsup_{m\to\infty} \psi_{m,\omega}\bigl(\max\{d_n,d_{n+1}\}\bigr)
= \psi\bigl(\max\{d_n,d_{n+1}\}\bigr) \le g\bigl(\max\{d_n,d_{n+1}\}\bigr).
\end{equation}
The equality holds because \(\psi_{m,\omega}\to\psi\) pointwise on \([0,D]\) and the convergence is uniform, hence certainly pointwise. Let \(r:=\limsup_{k\to\infty} d_k\). By letting \(n\to\infty\) and using the right continuity of \(g\) (Lemma~\ref{lem:g}) together with the fact that \(\max\{d_n,d_{n+1}\}\to r\) along a subsequence, we obtain
\begin{equation}
r \le g(r).
\end{equation}
If \(r>0\), then \(g(r)<r\) by Lemma~\ref{lem:g}, a contradiction. Hence \(r=0\), i.e.,
\begin{equation}
\lim_{n\to\infty} d_n = 0. \label{eq:dn0_fibre}
\end{equation}

\textit{(ii) The sequence \(\{x_n\}\) is Cauchy.}
For each \(n\ge0\), define
\begin{equation}
a_n:=\sup_{p\ge 1} d(x_n,x_{n+p}).
\end{equation}
We prove that \(a_n\to0\). From (i), \(d_n\to0\). Given \(\varepsilon>0\), choose \(N_2\) such that \(d_k<\varepsilon\) for all \(k\ge N_2\). Because the whole space \(X\) is bounded, all values \(U_T(x_n,x_{n+p})\) lie in \([0,D']\). By local uniform convergence, there exists \(M\ge1\) such that for all \(m\ge M\) and all \(t\in[0,D']\),
\begin{equation}
\psi_{m,\omega}(t)\le \psi(t)+\varepsilon\le g(t)+\varepsilon.
\end{equation}
Fix arbitrary \(n\ge N_2\) and \(p\ge1\). For any \(m\ge M\) set
\[
u:=T^m x_n = x_{n+m},\qquad v:=T^m x_{n+p}=x_{n+m+p}.
\]
We aim to estimate \(d(Tu,Tv)=d(x_{n+m+1},x_{n+m+p+1})\).

Consider the quantity \(L_T(u,v)\). By \eqref{eq:fibrecontract} applied with iteration index \(m\),
\begin{equation}
L_T(u,v) \le \psi_{m,\omega}\bigl(U_T(x_n,x_{n+p})\bigr) \le g\bigl(U_T(x_n,x_{n+p})\bigr)+\varepsilon. \label{eq:Luv_bound}
\end{equation}
We distinguish two cases according to the definition of \(L_T(u,v)\).

\textbf{Case 1:} \(P_T(u,v) > 0\). Then \(L_T(u,v) = P_T(u,v)\) and \eqref{eq:Luv_bound} gives
\[
P_T(u,v) \le g\bigl(U_T(x_n,x_{n+p})\bigr)+\varepsilon.
\]
By Lemma~\ref{lem:safe} (the safe estimate),
\[
d(Tu,Tv) \le P_T(u,v) + d(Tu,u) + d(Tv,v)
= P_T(u,v) + d_{n+m} + d_{n+m+p}.
\]
Since \(n\ge N_2\) and \(m\ge M\ge1\), we have \(d_{n+m}, d_{n+m+p} < \varepsilon\). Consequently,
\[
d(Tu,Tv) \le g\bigl(U_T(x_n,x_{n+p})\bigr) + 3\varepsilon.
\]

\textbf{Case 2:} \(P_T(u,v) = 0\). Then by definition \(L_T(u,v) = d(Tu,Tv)\), and \eqref{eq:Luv_bound} directly implies
\[
d(Tu,Tv) \le g\bigl(U_T(x_n,x_{n+p})\bigr) + \varepsilon.
\]

In either case we obtain the uniform estimate
\begin{equation}
d(x_{n+m+1},x_{n+m+p+1}) \le g\bigl(U_T(x_n,x_{n+p})\bigr)+3\varepsilon. \label{eq:est_fibre}
\end{equation}
Now we bound \(U_T(x_n,x_{n+p})\):
\begin{align}
U_T(x_n,x_{n+p}) &= \max\bigl\{d(x_n,x_{n+p}),\,d_n,\,d_{n+p},\,d(x_{n+1},x_{n+p+1})\bigr\} \nonumber\\
&\le \max\bigl\{a_n,\,\varepsilon,\,a_n+2\varepsilon\bigr\} \le a_n+2\varepsilon,
\end{align}
where we used \(d(x_{n+1},x_{n+p+1})\le d(x_n,x_{n+p})+d_n+d_{n+p}\le a_n+2\varepsilon\).
Since \(g\) is nondecreasing, \eqref{eq:est_fibre} becomes
\begin{equation}
d(x_{n+m+1},x_{n+m+p+1}) \le g(a_n+2\varepsilon)+3\varepsilon.
\end{equation}
The right-hand side is independent of \(p\). Taking the supremum over \(p\ge1\) yields
\begin{equation}
a_{n+m+1} \le g(a_n+2\varepsilon)+3\varepsilon.
\end{equation}
Let \(L:=\limsup_{n\to\infty} a_n\). Since shifting indices does not change the limsup, we obtain
\begin{equation}
L \le g(L+2\varepsilon)+3\varepsilon.
\end{equation}
Letting \(\varepsilon\to0^+\) and using the right continuity of \(g\) gives \(L\le g(L)\). If \(L>0\), then \(g(L)<L\), contradiction. Hence \(L=0\), so \(a_n\to0\). This means \(\{x_n\}\) is Cauchy.

\textit{(iii) Existence and uniqueness.}
By completeness, \(x_n\to z\) for some \(z\in X\). Continuity of \(T\) implies \(Tz = \lim_n Tx_n = \lim_n x_{n+1}=z\), so \(z\) is a fixed point.
To prove uniqueness, suppose \(z_1,z_2\) are two fixed points. Then \(P_T(z_1,z_2)=d(z_1,z_2)\) (all four distances equal \(d(z_1,z_2)\)). If \(d(z_1,z_2)>0\), then \(L_T(z_1,z_2)=P_T=d(z_1,z_2)\); if \(d(z_1,z_2)=0\), then \(L_T(z_1,z_2)=d(Tz_1,Tz_2)=0=d(z_1,z_2)\). In both cases \(L_T(T^n z_1,T^n z_2)=L_T(z_1,z_2)=d(z_1,z_2)\). Also \(U_T(z_1,z_2)=d(z_1,z_2)\). Applying \eqref{eq:fibrecontract} gives \(d(z_1,z_2)\le \psi_{n,\omega}(d(z_1,z_2))\) for all \(n\). Letting \(n\to\infty\) yields \(d(z_1,z_2)\le \psi(d(z_1,z_2))\), which forces \(d(z_1,z_2)=0\). Thus \(z_1=z_2\). Denote this unique point by \(z(\omega)\).

\textit{(iv) Global convergence on the fibre.}
For an arbitrary starting point \(y_0\in X\), the same argument (or simply applying the triangle inequality with the fixed point \(z(\omega)\)) shows that the orbit \(\{T^n y_0\}\) is Cauchy and its limit must be a fixed point, which by uniqueness equals \(z(\omega)\). Hence every orbit converges to \(z(\omega)\).

\textbf{Step 4: Gluing and measurability.}
We have defined for a.e. \(\omega\in\Omega_1\) a unique fixed point \(z(\omega)\) of \(f_\omega\). Set \(z(\omega)=\theta_\omega\) for \(\omega\notin\Omega_1\). Since the fibre fixed point is unique and the iteration of \(f_\omega\) from any starting point converges to it, we may pick a fixed section \(x_0\in G\) (e.g., \(\theta\)) and observe that for a.e. \(\omega\),
\begin{equation}
\lim_{n\to\infty} d_\omega\bigl(f_\omega^n(x_0(\omega)), z(\omega)\bigr) = 0.
\end{equation}
But \(f_\omega^n(x_0(\omega)) = (f^n x_0)(\omega)\) by the fibre representation. Hence the sequence of random elements \(\{f^n x_0\}\) converges to \(z\) pointwise almost surely, and therefore in probability. Since \(G\) is \(L^0\)-closed and all iterates lie in \(G\), the limit \(z\) belongs to \(G\). Moreover, \(z\) is measurable as the almost sure pointwise limit of measurable sections.

\textbf{Step 5: Verification of fixed point and uniqueness.}
For a.e. \(\omega\), \((f(z))(\omega)=f_\omega(z(\omega))=z(\omega)\); thus \(f(z)=z\) in \(E\). If \(w\in G\) is another fixed point, then for a.e. \(\omega\), \(w(\omega)\) is a fixed point of \(f_\omega\). By fibre-wise uniqueness, \(w(\omega)=z(\omega)\) a.e., so \(w=z\).

\textbf{Step 6: Global random convergence.}
For any \(x\in G\), the fibre-wise convergence proved in Step 3 shows that for a.e. \(\omega\),
\begin{equation}
\lim_{n\to\infty} d_\omega\bigl((f^n x)(\omega), z(\omega)\bigr) = 0.
\end{equation}
By the properties of the fibre representation, this means \(\|f^n x - z\| \to 0\) in probability, i.e., convergence in the \((\epsilon,\lambda)\)-topology. This completes the proof.
\end{proof}

\begin{remark}\label{rem:det}
If the probability space is trivial (a singleton with the trivial \(\sigma\)-algebra), the assumptions reduce to those of a deterministic relaxed asymptotic contraction on a bounded complete metric space, and Theorem~\ref{thm:main} recovers the corresponding deterministic fixed point theorem. The proof above actually contains a complete deterministic argument; we have chosen to present it entirely within the random framework to emphasize the unity of the method.
\end{remark}

\section{Corollaries}

We now clarify how our main theorem generalizes several known results.

\begin{corollary}[Random Kirk-type asymptotic contraction]\label{cor:kirk}
Let \(G\) be a nonempty, \(L^0\)-closed, \(\sigma\)-stable, essentially bounded subset of a complete RN module \(E\) with \(\theta\in G\). Let \(f:G\to G\) satisfy the structural compatibility conditions (local property, pointwise, \(\sigma\)-stability, fibre-wise continuity). Suppose there exists a sequence of nondecreasing functions \(\psi_n:[0,\infty)\to[0,\infty)\) converging uniformly on \([0,\infty)\) to a Boyd--Wong function \(\psi\), and a set \(\Omega_0\subset\Omega\) with \(P(\Omega_0)=1\) such that for every \(\omega\in\Omega_0\) and for all \(x,y\in G\),
\begin{equation}
\|f^n x-f^n y\|(\omega) \le \psi_n(\|x-y\|(\omega)). \label{eq:kirk_cond}
\end{equation}
Then \(f\) has a unique fixed point in \(G\) and all iterates converge in the \((\epsilon,\lambda)\)-topology.
\end{corollary}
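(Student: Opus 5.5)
The plan is to reduce Corollary~\ref{cor:kirk} to Theorem~\ref{thm:main}. We show that $f$ is a random relaxed asymptotic contraction in the sense of Definition~\ref{def:RRAC}, taking $\Psi_n(\omega,t):=\psi_n(t)$, which does not depend on $\omega$. Conditions (1) and (3) of Definition~\ref{def:RRAC} are assumed outright.

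For condition (2), measurability of $\Psi_n(\cdot,t)$ is trivial, since it is constant in $\omega$. Monotonicity of $\Psi_n(\omega,\cdot)$ is the assumed monotonicity of $\psi_n$. Uniform convergence on $[0,\infty)$ implies uniform convergence on every bounded subset, and this holds for every $\omega$. We keep the common full-measure set $\Omega_0$ from the hypothesis.

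It remains to verify \eqref{eq:rRAC} for every $\omega\in\Omega_0$, every $n$ and all $x,y\in G$. This is a chain of two comparisons, each following from the elementary inequalities recorded after \eqref{eq:rUT}. First, $L_f(u,v)\le\|f(u)-f(v)\|$ pointwise. Applying this with $u=f^nx$, $v=f^ny$, and then using \eqref{eq:kirk_cond} at index $n+1$, gives
\[
L_f(f^nx,f^ny)(\omega)\le \|f^{n+1}x-f^{n+1}y\|(\omega)\le \psi_{n+1}\bigl(\|x-y\|(\omega)\bigr).
\]
Second, $\|x-y\|\le U_f(x,y)$ pointwise, so monotonicity of $\psi_{n+1}$ yields
\[
\psi_{n+1}\bigl(\|x-y\|(\omega)\bigr)\le\psi_{n+1}\bigl(U_f(x,y)(\omega)\bigr).
\]
Both inequalities hold at every $\omega\in\Omega_0$ simultaneously for all pairs, because they are pointwise statements about fixed representatives, which we choose once and for all.

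The step needing some care is the index shift: the relaxed condition compares $L_f$ at stage $n$ with the bound $\psi_{n+1}$. We therefore set $\Psi_n:=\psi_{n+1}$ rather than $\psi_n$. This shifted sequence still converges uniformly to $\psi$, so nothing is lost.

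A second minor point is that the inequality $L_f(u,v)\le\|f(u)-f(v)\|$ must hold everywhere on $\Omega_0$, not merely almost surely. This is immediate from the definition \eqref{eq:rLT}: either $L_f$ equals $\|f(u)-f(v)\|$, or it equals $P_f$, which is a minimum including that term.

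With all hypotheses of Theorem~\ref{thm:main} verified, it yields a unique fixed point $z\in G$ and convergence $f^nx\to z$ in the $(\epsilon,\lambda)$-topology for every $x\in G$, which is exactly the conclusion of the corollary.
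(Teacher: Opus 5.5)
Your proposal is correct and is essentially the paper's proof: you bound $L_f(f^nx,f^ny)$ by $\|f^{n+1}x-f^{n+1}y\|$, apply the Kirk hypothesis at index $n+1$, use monotonicity of $\psi_{n+1}$ with $\|x-y\|\le U_f(x,y)$, and set $\Psi_n(\omega,t)=\psi_{n+1}(t)$ before invoking Theorem~\ref{thm:main}. The only blemish is your opening line, which still announces $\Psi_n(\omega,t):=\psi_n(t)$; change it to the shifted choice $\Psi_n(\omega,t):=\psi_{n+1}(t)$ that you adopt later.
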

\begin{proof}
From the definitions of the quasi-metrics we have almost surely \(L_f(f^n x,f^n y)\le \|f^{n+1}x-f^{n+1}y\|\) and \(\|x-y\|\le U_f(x,y)\). Using \eqref{eq:kirk_cond} with \(n+1\) instead of \(n\) gives, for all \(\omega\in\Omega_0\) and all \(x,y\in G\),
\[
L_f(f^n x,f^n y)(\omega) \le \|f^{n+1}x-f^{n+1}y\|(\omega) \le \psi_{n+1}(\|x-y\|(\omega)) \le \psi_{n+1}(U_f(x,y)(\omega)).
\]
Set \(\Psi_n(\omega,t) = \psi_{n+1}(t)\). This defines a sequence of measurable functions, each nondecreasing in the second argument, converging uniformly on \([0,\infty)\) to \(\psi\) (hence certainly locally uniformly). The common exceptional set \(\Omega_0\) ensures that condition \eqref{eq:rRAC} holds with these \(\Psi_n\). All other requirements of Definition~\ref{def:RRAC} are satisfied by hypothesis, so the result follows from Theorem~\ref{thm:main}.
\end{proof}

\begin{corollary}[Random pointwise contractions]\label{cor:pointwise}
If a map \(f:G\to G\) satisfies the conditions of Definition~\ref{def:RRAC} but with the bounds \(\Psi_n\) replaced by a sequence \(\Phi_n(x,y)\) that converges a.s. uniformly on \(G\times G\) to a function \(\Phi(x,y)\) with \(\Phi(x,y)\le \psi(U_f(x,y))\) a.s., then \(f\) is still a random relaxed asymptotic contraction (after a suitable measurable redefinition of \(\Psi_n\)) and Theorem~\ref{thm:main} applies.
\end{corollary}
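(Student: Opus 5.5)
The plan is to manufacture from the given family $\Phi_n$ a sequence $\Psi_n:\Omega\times[0,\infty)\to[0,\infty)$ satisfying condition (2) of Definition~\ref{def:RRAC}, and then invoke Theorem~\ref{thm:main}. Conditions (1) and (3) are assumed unchanged. I read the hypothesis as follows: there is a common full-measure set $\Omega_0$ such that, for every $\omega\in\Omega_0$ and all $x,y\in G$, we have $L_f(f^nx,f^ny)(\omega)\le\Phi_n(x,y)(\omega)$, and in addition $\varepsilon_n(\omega):=\sup_{x,y\in G}|\Phi_n(x,y)(\omega)-\Phi(x,y)(\omega)|\to0$ and $\Phi(x,y)(\omega)\le\psi(U_f(x,y)(\omega))$.

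\textbf{Construction of $\Psi_n$.} The natural choice is
\[
\Psi_n(\omega,t):=\sup_{0\le s\le t}\psi(s)+\varepsilon_n(\omega)=g(t)+\varepsilon_n(\omega),
\]
with $g$ the monotone majorant of $\psi$ from Lemma~\ref{lem:g}. For $\omega\in\Omega_0$ this gives
\[
L_f(f^nx,f^ny)(\omega)\le\Phi(x,y)(\omega)+\varepsilon_n(\omega)\le\psi(U_f(x,y)(\omega))+\varepsilon_n(\omega)\le\Psi_n(\omega,U_f(x,y)(\omega)).
\]
Moreover $\Psi_n(\omega,\cdot)$ is nondecreasing, and $\Psi_n(\omega,\cdot)\to g$ uniformly on all of $[0,\infty)$.

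\textbf{Main obstacle: the limit is $g$, not $\psi$.} Definition~\ref{def:RRAC} asks for convergence to a Boyd--Wong function. Here I would first check that $g$ itself is one. By Lemma~\ref{lem:g}, $g$ is nondecreasing and right continuous, hence right upper semicontinuous, and $g(t)<t$ for $t>0$. So $f$ is a random relaxed asymptotic contraction with respect to $g$ in place of $\psi$. Since $\psi$ is already nondecreasing in Definition~\ref{def:psi}, in fact $g=\psi$, and this step is harmless.

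\textbf{Measurability.} The remaining technical point is that $\varepsilon_n$ is a supremum over the possibly uncountable set $G\times G$, so it need not be measurable. I would replace it by a measurable majorant $\tilde\varepsilon_n\ge\varepsilon_n$ that still tends to $0$ almost surely; this is the ``suitable measurable redefinition'' in the statement. One candidate is the essential supremum in $L^0$ of the family $\{|\Phi_n(x,y)-\Phi(x,y)|\}$. This is attained as a countable supremum, so it is measurable, and it dominates each member almost surely.

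There is a subtlety: the essential supremum dominates each pair only off a null set depending on the pair. To recover a common exceptional set I would use the fibre-wise continuity together with a countable $\sigma$-stable dense generating family. If this route fails, I would simply strengthen the hypothesis so that $\varepsilon_n$ is assumed measurable. The step $\tilde\varepsilon_n\to0$ a.s. should then follow from the almost-sure uniform convergence, possibly after shrinking $\Omega_0$ by a null set.

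Finally, measurability of $\Psi_n(\cdot,t)$ is clear, since $g(t)$ is a constant and $\tilde\varepsilon_n$ is measurable. Theorem~\ref{thm:main} then applies.
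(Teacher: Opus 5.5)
The paper states this corollary without proof, so there is no argument to compare against. Judged on its own, your construction $\Psi_n(\omega,t)=\psi(t)+\varepsilon_n(\omega)$ is the natural one. You are right that $g=\psi$, so that detour is unnecessary. The construction correctly gives the inequality on $\Omega_0$, monotonicity in $t$, and uniform convergence to $\psi$. The genuine gap is the step that the phrase ``suitable measurable redefinition'' refers to, and you leave it open. Your two candidate fixes fail for opposite reasons.

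\emph{The essential supremum does not dominate on a common set.} The essential supremum $e_n$ of $\{|\Phi_n(x,y)-\Phi(x,y)|\}$ is realized as a countable supremum of the fixed representatives. Hence it is measurable with $e_n\le\varepsilon_n$ pointwise, and $e_n\to0$ on $\Omega_0$ is fine. But it dominates $|\Phi_n(x,y)-\Phi(x,y)|$ only off a null set depending on $(x,y)$, and an uncountable union of such sets need not be null. So you do not obtain $L_f(f^nx,f^ny)(\omega)\le\psi(U_f(x,y)(\omega))+e_n(\omega)$ for all $x,y$ at a single $\omega$. That is exactly what condition~(2) of Definition~\ref{def:RRAC} demands.

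\emph{A dominating measurable majorant need not tend to $0$.} Take $\varepsilon_n=\1_{A_n}$ with $A_n\subset[0,1]$ pairwise disjoint sets of full outer measure. This is compatible with all hypotheses, e.g.\ for $f\equiv\theta$ and suitably chosen representatives of the $\Phi_n(x,y)$. Then $\varepsilon_n\to0$ everywhere. Yet every measurable $h\ge\1_{A_n}$ (even off a null set) satisfies $h\ge1$ a.e., because $\{h<1\}$ is measurable and misses a set of full outer measure.

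\emph{The density repair does not close the gap.} Your proposed repair via fibre-wise continuity and a countable dense family fails for three reasons. No (random) separability of $G$ is assumed. $\Phi_n$ and $\Phi$ carry no continuity in $(x,y)$. And $(x,y)\mapsto L_f(f^nx,f^ny)(\omega)$ is not lower semicontinuous, because of the switching rule. Where $P_f(f^nx,f^ny)(\omega)=0$ it equals $\|f^{n+1}x-f^{n+1}y\|(\omega)$, while nearby pairs with $P_f>0$ have $L_f=P_f\approx0$. So inequalities at approximating pairs say nothing at the limit pair.

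Your fallback ``assume $\varepsilon_n$ measurable'' changes the statement. Likewise, your common-null-set reading of ``$\Phi(x,y)\le\psi(U_f(x,y))$ a.s.'' is doing real work, since under the pairwise reading the same uncountable-union problem reappears. It should be stated as a hypothesis.

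To finish, you have two options.
\begin{enumerate}
\item[(a)] Make explicit that the ``suitable measurable redefinition'' presupposes measurable $\tilde\varepsilon_n\ge\varepsilon_n$ on a common full-measure set with $\tilde\varepsilon_n\to0$ a.s.\ (outer almost sure uniform convergence). Then $\Psi_n(\omega,t)=\psi(t)+\tilde\varepsilon_n(\omega)$ works verbatim.
\item[(b)] Observe that measurability of $\Psi_n$ in $\omega$ is never used in the proof of Theorem~\ref{thm:main}. Steps 2--3 fix $\omega\in\Omega_1$ and use only the deterministic functions $\psi_{n,\omega}=\Psi_n(\omega,\cdot)$, and Steps 4--6 do not involve $\Psi_n$ at all. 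So your possibly non-measurable $\Psi_n$ already yields existence, uniqueness and convergence. However, membership in the class of Definition~\ref{def:RRAC} in the literal sense is then not established.
\end{enumerate}
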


\section{Conclusion}

We have introduced the concept of a random relaxed asymptotic contraction, which combines the flexibility of two quasi-metrics with the asymptotic bounds of Boyd--Wong type within the framework of random normed modules. The main result guarantees existence and uniqueness of a random fixed point as well as convergence of iterates under mild structural conditions. Its proof is self-contained and exploits the power of the \(\sigma\)-stable fibre decomposition technique to transfer a delicate iterative estimate directly to the random setting. The theorem simultaneously contains a random Kirk-type theorem and unifies several known deterministic and random asymptotic contraction results. Future work could extend this approach to multi-valued random contractions or to random systems with coupled mappings.

\end{document}